\title{Proper colorings of a graph in linear time using a number of
colors linear in the maximum degree of the graph}
\author{Kritika Bhandari \and Mark Huber}
\begin{document}
\maketitle

\newcommand{\ind}{\mathbb{I}}
\newcommand{\prob}{\mathbb{P}}
\newcommand{\mean}{\mathbb{E}}
\newcommand{\unifdist}{\textsf{Unif}}

\newtheorem{lemma}{Lemma}
\newtheorem{theorem}{Theorem}

\begin{abstract}
  
A new algorithm for exactly sampling from the set of proper colorings of
a graph is presented. This is the first such algorithm that has an
expected running time that is guaranteed to be linear in the size of a 
graph with maximum degree \( \Delta \)
when the number of colors is greater than \( 3.637 \Delta + 1\).

\end{abstract}

\section{Introduction}\label{introduction}

A \emph{proper coloring} of a graph is an assignment of colors from a
set \(C = \{1, \ldots, k\}\) to each node such that every edge of the
graph has the endpoints of the edge assigned different colors. That is,
for a graph \(G = (V, E)\) and color set \(C\), \(x:V \rightarrow C\) is
a proper coloring means that
\((\forall \{i, j \} \in E)(x(i) \neq x(j))\). When a graph \(G\) has a
proper coloring, say that \(G\) is \emph{\(k\)-colorable}.

Such colorings were introduced by Guthrie~\cite{guthrie1880} in the context of the four-colorability problem of planar graphs. Karp~\cite{karp1972} showed that determining whether general graphs are \(k\) colorable for \(k \geq 3\) is an
NP-complete problem. Jerrum~\cite{jerrum1995} showed (again when
\(k \geq 3\)) that counting the number of proper colorings of a graph is
\#P-complete.

Let \(\Delta\) be the maximum degree of the graph. When
\(k \geq \Delta + 1\), there must exist a coloring of the graph,
therefore the existence problem is no longer NP-complete. However, the
associated counting problem remains \#P-complete~\cite{jerrum1995}. So next consider the problem of approximating the number of proper colorings within given
relative error \(\epsilon > 0\).

Jerrum, Valiant, and Vazirani introduced a method for creating
randomized approximation algorithms~\cite{jerrumvaliantvazirani1986} for counting the
number of \emph{self-reducible} combinatorial objects given the ability
to sample at least approximately uniformly from the original problem
together with any reduced problems. Fortunately, proper colorings fall
into this class of self-reducible problems. In~\cite{jerrum1995}, Jerrum showed that when
\(k > 2 \Delta\), a simple Gibbs Markov chain (also known as Glauber
dynamics) with stationary distribution uniform over the set of proper
colorings was rapidly mixing using a coupling argument. Therefore this
Markov chain could be used to approximately uniformly sample from
\(k\)-colorings of a graph with \(n\) nodes using \(\Theta(n \ln(n))\) steps
where each step took \( O(\Delta) \) time.

Vigoda~\cite{vigoda1999} then
introduced a new Markov chain that required \(\Theta(n \ln(n))\) steps
to mix when \(k > (11 / 6)\Delta\), however, it required changing the
colors of \(O(n)\) nodes at each step, and so ran in \(O(n^2 \ln(n))\) time
total. This \(11 / 6\) constant was reduced to \((11 / 6) - \epsilon_0\)
where \(\epsilon_0 > 0\) by Chen et al.~\cite{chen2019}, using the same chain
introduced by Vigoda.

At the same time that approximate sampling methods were being developed,
progress on the development of exact sampling techniques was being made
as well. In 1996, Propp and Wilson introduced a \emph{perfect sampling}
protocol called coupling from the past (CFTP)~\cite{proppwilson1996}. This protocol gave a
general method for constructing algorithms that could generate samples
exactly from the stationary distribution of a Markov chain using a
random number of steps in the chain. The method applied automatically to
monotonic Markov chains, unfortunately the chains used by Jerrum and
Vigoda did not fall into this category.

In 1998, Huber~\cite{huber1998}
introduced the idea of a \emph{bounding chain} that ran alongside the
original Markov chain and which allowed CFTP to be applied to the chain.
This method was applied to the coloring chain used by Jerrum to obtain
exactly uniform samples from the set of proper colorings in time
\(\Theta(n \ln(n))\) when \(k > \Delta(\Delta + 2)\).

This bound on the number of colors that was quadratic in the maximum
degree of the graph was finally broken by Bhandari and Chakraborty~\cite{bhandari2020} in 2020.  They only required
\(k > 3 \Delta\) to get a \(\Theta(n \ln(n))\) expected run time. They
again used the bounding chain method. Jain, Sah, and Sawhney~\cite{jain2021} then improved once
more on the bounding chain method, requiring only
\(k > ((8 / 3) + o(1)) \Delta\) colors to obtain a \(\Theta(n \ln(n))\)
average run time algorithm.

Now consider \emph{linear time} algorithms for generating proper
colorings. No known approximate sampling methods exist that use linear
time. A \emph{randomness recycler} style algorithm for this problem was
given in~\cite{fillhuber2000}, but
required \(\Omega(\Delta^4)\) colors to guarantee that the expected running
time is linear.

A randomness recycler (RR) algorithm is an extension of the acceptance
rejection method introduced
formally by Von Neumann in 1951~\cite{vonneumann1951} and the popping technique of Propp and
Wilson from 1998~\cite{proppwilson1998}. The
idea is to first generate a sample from an
easier distribution than the target. Then accept this draw as a
realization from the target with probability built to make the
distribution of the accepted draw equal to that of the target. In the
basic acceptance rejection method, if the draw is not accepted, it is
thrown entirely away.

In an RR approach, instead of discarding the sample entirely, as much of
the sample is kept (recycled) as possible. Yes, the sample has been
biased to a new distribution by the rejection, but that does not mean
that there is not still plenty of randomness left in the sample.

So an RR algorithm keeps track not only of a state, but also of the set
of unnormalized weights that describe the distribution of that state.
The algorithm starts with a set of weights that are easy to sample from,
and then at each step alters either the state, the weights, or both in a
consistent fashion.

An RR algorithm then runs as follows.

\begin{enumerate}
\def\labelenumi{\arabic{enumi}.}
\item
  Draw a state from an initial set of weights.
\item
  Until the weights are a multiple of the target weights, update either
  the state or the weights. Valid updates are explored in Section 2.
\item
  Output the final state.
\end{enumerate}

As shown in~\cite{fillhuber2000,huber2016}, the distribution of this final state will be exactly that of the target distribution.

The main result of this paper is an implementation of the RR protocol
that yields the first algorithm that samples proper colorings in linear
time when the number of colors is linear in the maximum degree.

\begin{theorem}
There exists a randomness recycler algorithm for generating a sample
that is an exact uniform draw from the proper colorings of a graph with \( n \) nodes and  maximum degree \(\Delta\) that uses (on average) \(\Theta(n)\) steps
each taking \(O(\Delta)\) time to execute and using at most
\(O(\Delta \log(k))\) uniform random bits when \(k > 3.637 \Delta + 1 \).
\end{theorem}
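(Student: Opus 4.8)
The plan is to realize the sampler inside the randomness recycler framework set up in Section~2 and then carry out a drift analysis of the resulting process. Fix an arbitrary ordering $v_1,\dots,v_n$ of the vertices and let $G_t$ be the subgraph induced by $\{v_1,\dots,v_t\}$. The algorithm maintains an index $t$ together with a proper coloring of $G_t$, and the invariant to be preserved is that, conditioned on the current vector of weights, this coloring is uniform over the proper colorings of $G_t$; the target weights are reached when $t=n$. The source of forward progress is the elementary fact that if a color for $v_{t+1}$ is drawn uniformly from the whole palette $\{1,\dots,k\}$ and $v_{t+1}$ is \emph{committed} exactly on the event that this color avoids every already-colored neighbor, then on that event the coloring of $G_{t+1}$ is again exactly uniform, because every proper coloring of $G_{t+1}$ is then produced with one and the same probability. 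Such a commit step consumes one uniform color, i.e.\ $\lceil \log_2 k\rceil$ random bits, and costs $O(\Delta)$ time to examine the neighbors of $v_{t+1}$.

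The delicate part is the complementary \emph{reject} event, in which the sampled color collides with a committed neighbor of $v_{t+1}$. Conditioning on this event multiplies the weight of the current coloring by the number of colors blocked at $v_{t+1}$, so the coloring is no longer uniform and some committed structure has to be handed back. I would cancel this extra weight by a valid randomness-recycler move (in the sense of Section~2): first split the reject region into disjoint pieces by an auxiliary random choice among the colliding neighbors, then on each piece un-commit a small set of committed neighbors of $v_{t+1}$ and re-randomize their colors, leaving the configuration with a uniform-type weight on a graph with a few fewer committed vertices. Checking that each such move is a legal randomness-recycler update is precisely the job of the valid-update lemma of Section~2; granting it, the fact that the final coloring is an exact uniform sample follows at once from the general theory of \cite{fillhuber2000,huber2016}. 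A reject step also touches only $v_{t+1}$ and its at most $\Delta$ neighbors, so it runs in $O(\Delta)$ time and uses $O(\Delta\log k)$ bits --- at most $O(\Delta)$ recolorings plus $O(1)$ auxiliary coins --- which already gives the per-step bounds claimed in the theorem.

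To bound the number of steps I would run a drift argument on the potential $\Phi$ equal to the number of committed vertices. A commit step increases $\Phi$ by $1$; a reject step decreases it by the number $R$ of vertices un-committed, which is at most $\Delta$ but whose conditional expectation given a reject can be bounded by an explicit function of $k/\Delta$. Writing $q$ for the (state-dependent) reject probability, which is at most about $\Delta/k$, the one-step conditional mean change in $\Phi$ is $(1-q)-q\,\mean[R\mid\text{reject}]$, and a short optimization of exactly how many neighbors the reject move un-commits shows that this quantity is bounded below by a positive absolute constant as soon as $k>3.637\,\Delta+1$. A standard drift bound for a process with uniformly positive drift and bounded downward jumps then yields $\mean[T]=\Theta(n)$ for the number $T$ of steps (the lower bound being trivial, since each vertex must be committed at least once), and combining this with the per-step time and randomness bounds above gives the theorem.

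I expect the main obstacle to be the design of the reject/recycle move itself: it must be simultaneously a valid randomness-recycler update \emph{and} economical enough that the expected number of vertices un-committed per reject stays below a constant instead of growing with $\Delta$ --- this is exactly the gap between a linear-time algorithm and the $\Omega(\Delta^4)$ colors required by \cite{fillhuber2000}. All the slack in the constant $3.637$ sits in how cleverly the blocked-color weight is neutralized while keeping $\mean[R\mid\text{reject}]$ small, so the sharp form of the argument should come down to optimizing that trade-off, and I would expect $3.637$ to emerge as the root of a small transcendental equation produced by that optimization.
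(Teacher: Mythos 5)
Your high-level framing (an RR scheme plus a drift/potential argument giving $\mean[T] = O(n)$ with $O(\Delta)$ work and $O(\Delta\log k)$ bits per step) matches the paper, but the proposal has a genuine gap exactly where you flag it: the reject/recycle move is not designed, and the paper's entire contribution lives there. Your plan is the vertex-by-vertex scheme -- maintain a uniform proper coloring of $G_t$, try to commit $v_{t+1}$ with a uniform color, and on rejection ``un-commit a small set of committed neighbors and re-randomize their colors, leaving the configuration with a uniform-type weight on a graph with a few fewer committed vertices.'' That last step is precisely what cannot be done in the form you describe: conditioned on rejection, the law of the configuration is uniform over colorings in which the active vertex has a monochromatic edge to some neighbor, and this event is not expressible as ``uniform over proper colorings of a smaller committed subgraph,'' nor does re-randomizing a few neighbors restore such a form. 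This is essentially why the earlier randomness-recycler for colorings in \cite{fillhuber2000} needed $\Omega(\Delta^4)$ colors. The paper's resolution is to enlarge the family of weight functions: the index assigns every node one of four statuses (forbidden a single common color, frozen at a color, ignored, unrestricted), the algorithm starts from all-ignored (a uniformly random, not-necessarily-proper coloring of the whole graph, rather than a growing subgraph), and rejection is absorbed by searching the neighbors without replacement and recording the outcome as forbidden/frozen labels. The three procedures \textbf{Remove ignored}, \textbf{Remove frozen}, and \textbf{Remove forbidden} -- in particular the $1/d$ recoloring proposal, the $(d-1)/n_1$ acceptance step, and the share function $h(w)$ used to split the residual weight $n_1 - (d-1)$ among neighbors -- are the concrete ``valid updates'' your sketch defers to, and verifying they are legal RR moves is Lemmas~1--2, not a citation to the general theory.

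Because the moves are unspecified, the quantitative half of your argument is also unsupported. Your potential (number of committed vertices) has no counterpart for the forbidden/frozen intermediate states that any workable reject move must create; the paper needs a three-weight potential $\phi = w_1\#V_{\text{froz}} + w_2\#V_{\text{for}} + \#V_{\text{ig}}$, and the threshold comes out of balancing the drift of \textbf{Remove forbidden} against \textbf{Remove frozen} and \textbf{Remove ignored}, yielding the condition $2\alpha^2 - 7\alpha - 1 > 0$ with $\alpha = (k-1)/\Delta$, i.e.\ $\alpha > \tfrac{1}{4}(7+\sqrt{57}) \approx 3.637$ (an explicit quadratic, not a transcendental equation). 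Asserting that ``a short optimization shows the drift is positive as soon as $k > 3.637\Delta+1$'' assumes the conclusion: the constant is a property of the specific moves the paper constructs, and cannot be derived from a move you have not exhibited.
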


Like all perfect simulation algorithms (and unlike the approximate
sampling algorithms) this algorithm can be run when this bound on \(k\)
is not satisfied. If the algorithm terminates, then the result is an
exact sample. The condition is only needed to establish that the
expected running time will be linear before the algorithm is run.

The rest of the paper is organized as follows. The next section reviews
basic concepts in RR type algorithms and presents the new algorithm for
proper colorings. The last section then analyzes the algorithm and shows
the main theorem.

\section{The algorithm}
\label{the-algorithm}

Since the algorithm uses the RR protocol, the general form of this
method will be discussed first.

\subsection{The Randomness Recycler Protocol}
\label{the-randomness-recycler-protocol}

The term \emph{weights} will be used to refer to any unnormalized
density. For example, consider the set of weights \(w(x)\) which is 1
when \(x \in \Omega\) and 0 otherwise. This can be written using the
indicator function \(w(x) = \mathbb{I}(x \in \Omega)\). This set of
weights encodes the uniform distribution over \(\Omega\).

Say that \(w_1\) is equivalent to \(w_2\) if \(w_1 = \alpha w_2\) for a
positive constant \(\alpha\). Since the weights are unnormalized, two
equivalent weight functions represent the same distribution.

An RR algorithm works by doing one of two steps that transforms the
weights.

\begin{enumerate}
\def\labelenumi{\arabic{enumi})}
\item
  Change the state randomly using a transition function \(p(x, y)\).
  This alters the weights of the state using the standard formula for
  Markov chain steps:\\
  \[
   w(x) \mapsto w'(y) = \sum_{x'} w(x') p(x', y).
   \]
\item
  Change the weight randomly. If the current weight function is
  \(w(x)\), then with probability \(p_{\text{acc}}(x)\) move to weights
  \(w(x) p_{\text{acc}}(x)\), and with probability
  \(1 - p_{\text{acc}}(x)\) move to weights
  \(w(x)(1 - p_{\text{acc}}(x))\). This can be viewed as
  \emph{splitting} the weight function \(w(x)\) into two weight
  functions.

  If \(p_1(x), \ldots, p_\ell(x)\) are nonnegative functions that sum to
  \(1\) for every \(x\), then with probability \(p_i(x)\) move to weight
  \(w(x) p_i(x)\). This can be viewed as splitting the weight function
  into \(\ell\) different possibilities.
\end{enumerate}

To use RR, there are two special weights. Let \(w_{\text{init}}\) be a
weight function from which it is easy to sample. Let \(w_{\text{tar}}\)
be the target weight function.

The algorithm begins with a state drawn from weights
\(w_{\text{init}}\). At each step, either a random alteration or a
splitting step is used. Typically \(p(x)\) is chosen so that
\(w(x)p(x)\) is closer to the target distribution. That means that
\(w(x)(1 - p(x))\) is usually farther away from the distribution.

The algorithm terminates when the weight function matches the target
weight function. The final state will be a draw exactly from the
distribution given by the final weights. See Chapter 8 of~\cite{huber2016}
for a proof of this result. (There the result is shown for general state
spaces, but reduces to the simpler form for finite spaces.)

\subsection{The RR algorithm for proper colorings}
\label{the-rr-algorithm-for-proper-colorings}

To build an RR algorithm for proper colorings, a set of indices is
needed that is broad enough to handle the target distribution, a
starting distribution from which it is easy to sample, and the ability
to index the set of weight functions used arising from both acceptance
and rejection.

In the weight function, each node will either be forbidden to have a
color, frozen at a color, can have the same color as neighboring nodes,
or must have a different color than neighboring nodes.

These properties will be encoded using a state \(x^*\) that assigns to
every node a value in \[
A = \{-k, -k + 1, \ldots, -1, 0, 1, 2, \ldots, k\} \cup \{\bot\}.
\] So \(x^* \in \Omega^* = A^V\).

\begin{enumerate}
\def\labelenumi{\arabic{enumi}.}
\item
  \textbf{Forbidden color} If node \(v\) is not allowed to be assigned
  color \(b\), say that node \(v\) is \emph{forbidden} to have color
  \(b\).

  In the index, set \(x^*(v) = -b\) for \(b \in C\) to indicate that
  node \(v\) is forbidden to have color \(b\).
\item
  \textbf{Frozen at a color} If node \(v\) must be assigned color \(c\),
  say that node \(v\) is \emph{frozen} at color \(c\).

  In the index, set \(x^*(v) = c\) for \(c \in C\) to indicate that node
  \(v\) is frozen at color \(c\).
\item
  \textbf{Ignored} Say that the state \emph{ignores} node \(v\) if the
  weights do not take node \(v\) into consideration when calculating if
  the coloring is proper or not.

  Write \(x^*(v) = 0\) to indicate that \(x(v)\) can equal \(x(w)\) for
  any neighbor of \(v\).
\item
  \textbf{Unrestricted} A node is unrestricted if none of the previous
  three conditions apply. Such a node is not allowed to have the same
  color as a neighbor.

  Let \(x^*(v) = \bot\) if the node acts like a regular node. In this
  case \(x(v)\) might be any element of \(C\), and \(x(v) \neq x(w)\)
  for all the neighbors \(w\) of \(v\).
\end{enumerate}

Recall that for indicator functions
\(\mathbb{I}(p \wedge q) = \mathbb{I}(p) \mathbb{I}(q)\) so multiple
requirements can be enforced through multiplication of their respective
indicator functions.

The factor for weight for a forbidden color node is easy, just set
\(\mathbb{I}(x(v) \neq b)\) if \(x^*(v) = -b\). Frozen color nodes use
\(\mathbb{I}(x(v) = x^*(v))\). The ignored nodes do not contribute to
the weight at all.

Finally, the constraint that endpoints of an edge must be different
colors will be enforced as long as at most one of the endpoints is
frozen and neither endpoint is ignored. For edge \(e = \{v, w \}\), this
can be written as \[
f_e(x, x^*) = \mathbb{I}((x(v) \neq x(w)) \vee (x^*(v) = x(v) = x^*(w) = x(w)) \vee (x^*(v) = 0 \vee x^*(w) = 0)).
\]

To be a valid coloring for an index, the correct nodes must be frozen at
certain colors, the correct nodes must be forbidden certain colors, and
for two nodes connected by an edge, either one or both of the nodes are
to be ignored, or they must be assigned different colors.

Let \(-C = \{-1, -2, \ldots, -k \}\). The weight associated with index
\(x^*\) and state \(x \in C^V\) is \begin{align*}
w_{x^*}(x) &= \left[\prod_{v:x^*(v) \in C} \mathbb{I}(x(v) = x^*(v)) \right]
  \left[\prod_{v:x^*(v) \in -C} \mathbb{I}\left(x(v) \neq -x^*(v)\right) \right]
  \left[ \prod_{e \in E} f_e(x, x^*) \right]
\end{align*}

Since \(w_{x^*}(x) \in \{0, 1\}\) (and there is at least one coloring
with positive weight) this is the weight function for a uniform
distribution. There will be three functions that start with a state
\(x\) drawn from weights \(w_{x^*}(x)\), and return a state \(y\) drawn
from weights \(w_{y^*}(y)\). Inside each of these functions, the weights
might temporarily deviate from uniform, but by the end of the function
the weights will be back to one of our indexed weights.

Because each indexed weight function is the product of indicators, the
overall weight function is just uniform over all states with positive
weight under that index.

A helpful notation to have will be \[
a(x^*) = \{x' \in C^V:w_{x^*}(x') = 1\}
\] so that \(x\) given \( x^* \) is uniform over \(a(x^*)\).

The initial index has \(x_0^*(v) = 0\) for all \(v\). Note \[
w_{x_0^*}(x) = \mathbb{I}(x \in C^V).
\] That is, it is uniform over all colorings, not just proper colorings.
This distribution
is straightforward to sample from, just draw each \(x(v)\) uniformly and
independently from \(C\).

If there is ever a node \(v\) that is forbidden a color, try to remove
that restriction first. Next, if there is a node \(v\) frozen at a
color, try to remove that restriction. Finally, if there is a node \(v\)
that is being ignored, try to remove that restriction.

Continue until the final index \(x^*_{\text{tar}}\) is reached, where
every node has value \(\bot\). This corresponds to the weights \[
w_{x^*_{\text{tar}}}(x) = \prod_{\{i, j\} \in E} \mathbb{I}(x(i) \neq x(j)),
\] which is the weight function for the uniform distribution over proper
colorings.

This algorithm can be written as follows.

\vspace*{10pt}

\textbf{RR step for proper colorings}\((x, x^*)\)

\begin{enumerate}
\def\labelenumi{\arabic{enumi}.}
\item
  If there exists \(v \in V\) such that \(x^*(v) = -b\) where
  \(b \in C\) then output \textbf{Remove forbidden}\((x, x^*, v)\) and
  exit.
\item
  Else if there exists \(v \in V\) such that \(x^*(v) = c\) where
  \(c \in C\) then output \textbf{Remove frozen}\((x, x^*, v)\) and
  exit.
\item
  Else if there exists \(v \in V\) such that \(x^*(v) = 0\), then output
  \textbf{Remove ignored}\((x, x^*, v)\) and exit.
\item
  Else output \((x, x^*)\) and exit.
\end{enumerate}

Given this one step algorithm, the main algorithm runs as follows.

\vspace*{10pt}

\textbf{RR algorithm for proper colorings}\((x, x^*)\)

\begin{enumerate}
\def\labelenumi{\arabic{enumi}.}
\item
  Let \(x^*(v) = 0\) for all \(v \in V\).
\item
  For each \(v \in V\), draw \(x(v)\) uniformly and independently from
  \(C\).
\item
  While \(x^* \neq ( \bot, \ldots, \bot)\) replace \((x, x^*)\) with the
  output from \textbf{RR step for proper colorings}\((x, x^*)\).
\item
  Output \((x, x^*)\).
\end{enumerate}

Therefore the heart of the algorithm is how to remove forbidden, frozen,
and ignored conditions from the index. These steps must maintain an
invariant to work.

Let \begin{align*}
(y_1, y^*_1) &= \boldsymbol{\mathsf{Remove\ forbidden}}(y, y^*, v) \\
(y_2, y^*_2) &= \boldsymbol{\mathsf{Remove\ frozen}}(y, y^*, v) \\
(y_3, y^*_3) &= \boldsymbol{\mathsf{Remove\ ignored}}(y, y^*, v) \\
\end{align*}

The invariant to be maintained is that if there is a single node \(v\)
with \(x^*(v) \in -C\) then all nodes \(w\) with \(x^*(w) \in -C\) must
have \(x^*(v) = x^*(w)\). In other words, there is at most one color
that is being forbidden to some of the nodes at any particular time.

Remark: how the node \(v\) is chosen within each step is immaterial to
the algorithm's operation. This could be random or use a deterministic
method such as numbering the nodes and always picking the node with the
smallest label. Any such method will result in a correct algorithm with
the expected running time given in Theorem 1.

Begin with the last procedure, removing an ignored node.

\subsubsection{Remove ignored}
\label{remove-ignored}

Suppose that \(x^*\) has only ignored or unrestricted nodes, and node
\(v\) is ignored. That is, \(x^*(v) = 0\) and for all \(w \neq v\),
\(x^*(w) \in \{0, \bot \}\).

Let \(r^*\) be the same index as \(x^*\) except that the ignored
restriction at \(v\) has been removed. That is,
\(r^*(V \setminus \{ v \}) = x^*(V \setminus \{ v \})\) and
\(r^*(v) = \bot\). Note that for any state \(x\). \[
w_{r^*}(x) = w_{x^*}(x) \prod_{w:\{v, w\} \in E} \mathbb{I}(x(v) \neq x(w)) \leq w_{x^*}(x).
\]

Therefore, the simplest approach to turning weights indexed by \(x^*\)
to weights indexed by \(r^*\) is to accept the result with probability
\(\prod_{w:\{v, w\} \in E} \mathbb{I}(x(v) \neq x(w))\). This
formulation is equivalent to accepting if and only if there is not a
neighbor of \(v\) that has the same color as \(v\).

If acceptance occurs, all is well. But if rejection occurs, the new
weight is \[
w_{x^*}(x) \left[ 1 - \prod_{w:\{v, w\} \in E} \mathbb{I}(x(v) \neq x(w)) \right]
= w_{x^*}(x) \mathbb{I}\left(\left[\sum_{w:\{v, w\} \in E} \mathbb{I}(x(v) = x(w)) \right] > 0 \right)
\] So the new weight is uniform over colorings where node \(v\) does
have a neighbor of the same color.

This condition is (unfortunately) not representable using the indices.
But what can be done is to search the non-ignored neighbors of the node \(v\)
uniformly at random without replacement until a neighbor \(w\) with
\(x(v) = x(w)\) is found.

The neighbors searched before finding this \(w\) cannot have the color
\(x(v)\), and \(w\) must have the color \(x(v)\). This can be represented
in the index by
forbidding the searching neighbors to have color \(x(v)\), and freezing
node \(w\) at color \(x(v)\).

At this point return the index of node \(v\) to \(0\). Finally, redraw
the color of \(v\) uniformly from \(C\).

So now the only task is to find an index \(s^*\) such that \[
w_{s^*}(x') = \mathbb{I}(x' \in a(x^*) \setminus a(r^*)).
\] Consider what this indicator is saying. It is 1 when the coloring
satisfied the conditions of \(x^*\) but not the conditions of \(r^*\).
This can only happen when \(x(v) = x(w)\) for one of the non-ignored
neighbors \(w\) of \(v\).

To find this neighbor, search through the neighbors of \(v\) uniformly
at random without replacement until the neighbor \(w\) matching color
\(x(v)\) is found. Each of the neighbors searched before finding \(w\)
does not have the color: this can be indexed by forbidding the color to
the node. Finally, \(w\) will have to be frozen at the color. At this
point \(w\) can be frozen at the color.

Putting these ideas together gives the following algorithm.

\textbf{Remove ignored}\((x, x^*, v)\)

\begin{enumerate}
\def\labelenumi{\arabic{enumi}.}
\item
  Let \(r^*(V \setminus \{ v \}) = x^*(V \setminus \{ v \})\) and
  \(r^*(v) = \bot\).
\item
  Search the non-ignored and unfrozen neighbors of \(v\) uniformly at
  random without replacement.
\item
  If no non-ignored and unfrozen neighbor of \(v\) has color \(x(v)\),
  output \((x, r^*)\) and exit.
\item
  Else do the following:

  \begin{enumerate}
  \def\labelenumii{\alph{enumii}.}
  \item
    let \(W = \{w_1, \ldots, w_\ell\}\) be the set of neighbors searched
    uniformly at random without replacement before finding non-ignored
    neighbor \(w\) with \(x(w) = x(v)\).
  \item
    Let
    \(s^*(V \setminus (W \cup \{v, w\})) = x^*(V \setminus (W \cup \{v, w\}))\),
    \(s^*(W) = \{-x(v), \ldots, -x(v)\}\), \(s^*(w) = x(w)\),
    \(s^*(v) = 0\).
  \item
    Let \(s(V \setminus \{ v \}) = x(V \setminus \{ v \})\). Draw
    \(s(v)\) uniformly from \(C\).
  \item
    Output \((s, s^*)\) and exit.
  \end{enumerate}
\end{enumerate}

\begin{lemma}
\textbf{Remove ignored} has the weight function given by the output
index for the output state.
\end{lemma}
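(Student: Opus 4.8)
The plan is to recognize \textbf{Remove ignored} as a composition of the two elementary RR moves (weight splitting and Markov transition), verify that each piece is legal and tracks the claimed weights, and then appeal to the correctness of the RR protocol (Chapter~8 of~\cite{huber2016}). Throughout, write $d$ for the number of neighbours $z$ of $v$ with $x^*(z)=\bot$; since $x^*$ has no frozen or forbidden nodes, these are exactly the non-ignored neighbours of $v$ that the algorithm searches. The input assumption is that, conditioned on index $x^*$, the state $x$ is uniform on $a(x^*)$.

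The first move is a two-way split with acceptance probability $p_{\mathrm{acc}}(x)=\prod_{z\,:\,\{v,z\}\in E,\ x^*(z)=\bot}\mathbb{I}(x(v)\neq x(z))\in\{0,1\}$. I would check that $w_{x^*}(x)\,p_{\mathrm{acc}}(x)=w_{r^*}(x)$: passing from $x^*$ to $r^*$ only changes the label of $v$ from $0$ to $\bot$, which turns the trivial factor $f_e\equiv1$ at each edge $e=\{v,z\}$ into $\mathbb{I}(x(v)\neq x(z))$ when $z$ is unrestricted and keeps it equal to $1$ when $z$ is ignored, while every other factor of $w_{x^*}$ is untouched. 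So the accept branch lands exactly on index $r^*$ with the correct weight, which is output in step~3. On the reject branch the weight is $w_{x^*}(x)\bigl(1-p_{\mathrm{acc}}(x)\bigr)=\mathbb{I}\bigl(x\in a(x^*)\setminus a(r^*)\bigr)$, matching the text.

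The obstacle is that this reject weight is not itself one of the indexed weights, so it must be broken up further, using two more splits and one Markov move. First split by the old colour $c:=x(v)$ via $p_c(x)=\mathbb{I}(x(v)=c)$ (these sum to $1$ over $c\in C$). Then split by the outcome $(W,w)$ of the without-replacement search among the $d$ non-ignored neighbours of $v$; the outcome $(W,w)$ has probability $\tfrac{|W|!\,(d-|W|-1)!}{d!}$ on colourings with $x(u)\neq x(v)$ for every $u\in W$ and $x(w)=x(v)$, and $0$ otherwise, and the identity $\sum_{(W,w)}|W|!\,(d-|W|-1)!=d!$ — obtained by partitioning the $d!$ orderings of the neighbours according to the maximal prefix containing no colour-$x(v)$ neighbour — shows these are valid split weights. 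After these two splits the weight on branch $(c,W,w)$ is, up to a positive constant, $w_{x^*}(x)\,\mathbb{I}(x(v)=c)\,\mathbb{I}(x(w)=c)\prod_{u\in W}\mathbb{I}(x(u)\neq c)$. Finally, redraw $x(v)$ uniformly from $C$ (a Markov move): applying $w'(y)=\sum_{x'}w(x')p(x',y)$ and using that $w_{x^*}$ does not depend on the colour of the ignored node $v$, the factor $\mathbb{I}(x(v)=c)$ is summed out and the branch weight becomes $\tfrac1k\,w_{x^*}(y)\,\mathbb{I}(y(w)=c)\prod_{u\in W}\mathbb{I}(y(u)\neq c)$.

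The remaining point, which I expect to be the main technical step, is the identity
\[
w_{s^*}(y)=w_{x^*}(y)\,\mathbb{I}(y(w)=c)\prod_{u\in W}\mathbb{I}(y(u)\neq c),
\]
where $s^*$ is the output index ($s^*(v)=0$, $s^*(w)=c$, $s^*(u)=-c$ for $u\in W$, and $s^*$ agreeing with $x^*$ elsewhere). This follows by checking that no edge factor changes when $w$ is turned from unrestricted to frozen-at-$c$ and the nodes of $W$ from unrestricted to forbidden-$c$: for an edge between two $\bot$-nodes, between a frozen-at-$c$ node and a $\bot$-node, between a forbidden-$c$ node and a $\bot$-node, or between a frozen-at-$c$ node and a forbidden-$c$ node, $f_e$ reduces to $\mathbb{I}(\cdot\neq\cdot)$ in every case — the ``both frozen equal'' and ``one label $=0$'' clauses never fire because $-c\notin C$ and none of these labels is $0$ — and edges meeting $v$ retain $f_e\equiv1$ since $v$ stays ignored; the frozen/forbidden indicator products of $w_{s^*}$ contribute precisely $\mathbb{I}(y(w)=c)\prod_{u\in W}\mathbb{I}(y(u)\neq c)$. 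Combining, on each branch $(c,W,w)$ the post-redraw weight is $\tfrac1k w_{s^*}(y)$, equivalent to $w_{s^*}$, which is the index returned together with the redrawn state. Since every move used is a legal RR move and every terminal weight is equivalent to the weight of the index output, the RR protocol guarantees that the output state is distributed according to the weight function of the output index, which is the assertion of the lemma.
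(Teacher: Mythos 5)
Your proof is correct and takes essentially the same route as the paper: the acceptance branch matches the paper's Case 1, and the split probabilities you compute (the $|W|!\,(d-|W|-1)!/d!$ factor for the without-replacement search outcome and the $1/k$ redraw) are exactly the constants appearing in the paper's Case 2, so the conclusion of uniformity over $a(s^*)$ is reached by the same calculation. The only difference is organizational: you close the argument by verifying the identity $w_{s^*}(y)=w_{x^*}(y)\,\mathbb{I}(y(w)=c)\prod_{u\in W}\mathbb{I}(y(u)\neq c)$ through the edge-factor bookkeeping, whereas the paper instead notes that states outside $a(s^*)$ cannot be output and that the resulting weight is a fixed positive constant on $a(s^*)$.
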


\begin{proof}

Let \((y, y^*)\) be the output of \textbf{Remove ignored}\((x, x^*)\).

Case 1: \((y, y^*) = (x, r^*)\). Since the state does not change in this
step, the output weight is just \[
w_{x^*}(x) \left[ \prod_{w:\{v, w\} \in E} \mathbb{I}(x(v) \neq x(w)) \right] = w_{r^*}(x)
\] and the case is complete.

Case 2: Suppose \((y, y^*) = (s, s^*)\). If \(s \notin a(s^*)\), then
either one of \(w_1, \ldots, w_\ell\) has color \(x(v)\) and would have
been assigned the role of \(w\) in the search, or \(w\) does not have
the color \(x(v)\), and would not have been chosen as the neighbor with
color \(x(v)\). In either situation, it is not possible for \(s\) to be
the output, so \(w_{s^*}(s) = 0\) as desired.

Now consider when \(s \in a(s^*)\). Then what would the chance of moving
from state \(x\) to state \(s\) be? While \(s^*\) tells us the set of
states \(\{w_1, \ldots, w_\ell\}\) searched before finding the neighbor with
the conflicting color as well as \(w\), it does not reveal the
\emph{order} in which they were searched. Therefore, the probability of
making this particular search, followed by the particular choice of
\(s(v)\), is \[
\frac{\ell!}{\deg(v)(\deg(v) - 1) \cdot (\deg(v) - \ell + 1)} \frac{1}{\deg(v) - \ell} \cdot \frac{1}{k}.
\] Hence \[
w_{s^*}(s) = w_{x^*}(s) \frac{\ell!}{\deg(v)(\deg(v) - 1) \cdot (\deg(v) - \ell + 1)} \frac{1}{\deg(v) - \ell} \cdot \frac{1}{k}.
\]

In particular, \(w_{s^*}(s)\) is a fixed positive constant when
\(s \in a(s^*)\). Therefore, this set of weights can be renormalized.

Since \(w_{x^*}(s)\) is constant over \(s\) where there exists neighbor
\(w\) of \(v\) with \(x(v) = x(w)\), and the other factors do not depend
on \(s\), the weights \(w_{s^*}(s)\) are constant for every state \(s\)
with positive weight. Hence the output state \(y\) is uniform over
\(a(y^*)\).

\end{proof}

\subsubsection{Remove frozen}
\label{remove-frozen}

This is the easiest of the three steps, since if there are no forbidden
nodes, then a frozen node \(v\) can be converted into an ignored node
where all of the unfrozen and non-ignored neighbors of \(v\) are forbidden to have color
\(x(v)\). The color of \(v\) can then be redrawn uniformly from \(C\),
and the state reindexed to be ignored.

For any node \(v\), let \(N(v)\) denote the set of neighbors of \(v\).

\vspace*{10pt}

\textbf{Remove frozen}\((x, x^*, v)\)

\begin{enumerate}
\def\labelenumi{\arabic{enumi}.}
\item
  Let \(r^*\) be the index where node \(v\) is changed to ignored, and
  neighbors of \(v\) that are unfrozen and non-ignored are changed to be forbidden to
  have color \(x(v)\).
\item
  Let \(r\) be the state which equals \(x\) everywhere except \(v\), and
  \(r(v)\) is chosen uniformly at random from \(C\).
\item
  Output \((r, r^*)\).
\end{enumerate}

\textbf{Remove frozen} has the weight function given by the output index
for the output state.

From \(x^*\), the frozen color of \(v\) is known. Therefore, for a given
state \(r\), there is exactly one state \(x\) that moves to \(r\) with
probability \(1 / k\). That means \[
w_{r^*}(r) = w_{x^*}(x) \frac{1}{k} = \mathbb{I}(r \in a(r^*)) \frac{1}{k}.
\] Therefore \(w_{r^*}(r)\) is equivalent to the uniform weights over
\(a(r^*)\), and the output is correct.

\subsubsection{Remove forbidden}
\label{remove-forbidden}

The most difficult restriction to deal with is when a node \(v\) is
forbidden to have color \(b = -x^*(v)\). In this case, it is necessary
to propose switching the color of \(x(v)\) to \(b\). The chance of doing
so needs to be high enough that a later acceptance/rejection step can
bring all the weights down to be the same.

Before starting, if a neighbor of \(v\) is already frozen at color
\(b\), then the forbidden color of \(b\) condition can just be removed
without changing the set of allowable states at all. That is the easy
case.

For the harder cases, start by defining the following sets and
constants. \begin{align*}
  n_1(x, x^*) &= \text{the number of valid colors for } v \text{ in state } x \text{ under index } x^* \\
  n_2(x^*) &= \text{the number of colors not used by frozen neighbors of } v \\
  n_3(x^*) &= \text{the number of neighbors of } v \text{ that are not frozen nor ignored} \\
d(x^*) &= n_2(x^*) - n_3(x^*) \\
U(x^*) &= \text{the set of neighbors of \( v \) that are neither frozen nor ignored} \\
\end{align*}

Note that \(d(x^*) - 1\) represents a lower bound on the number of
colors that are available to \(v\). That is, \[
n_1(x, x^*) \geq d(x^*) - 1.
\]

Form a new state \(r\) that is the same as state \(x\) everywhere except
(possibly) at node \(v\). With probability \(p_1\), set \(r(v) = b\) and
with probability \(1 - p_1\) leave \(r(v) = x(v)\). Note that
\(n_1(x, x^*) = n_1(r, x^*)\) since it does not depend on the color of
node \(v\).

Let \(r^*\) be the same as \(x^*\) for every node except \(v\), and set
\(r^*(v) = \bot\). Then the weight of \(r\) will be positive for every
state in \(a(r^*)\), but it also gives positive weight to states that
satisfy all the restrictions on \(x^*(V \setminus \{v\})\), but allows
\(v\) and a neighbor to both have color \(b\). Call this set of states
\(a_1\). Then \(a(r^*) \subseteq a_1\), so
\(\mathbb{I}(r \in a(r^*)) \leq \mathbb{I}(r \in a_1)\).

Moreover, the weights will not be uniform. For a given coloring \(r\) if
\(r(v) \neq b\) then there was only a single state \(x'\) that could
have moved to \(r\). But note that for \(r(v) = b\), there are
\(n_1(r, x^*)\) different states \(x' \in a(x^*)\) that could have moved
to \(r\).

Hence the weight after this step will be \begin{align*}
w_1(r) &= \mathbb{I}(r \in a_1) \left[ (1 - p_1) \mathbb{I}(x(v) = r(v)) + p_1 \sum_{x' \in a(x^*):x'(V \setminus \{v\}) = r(V \setminus \{ v \}) } \mathbb{I}(r(v) = b) \right] \\
&= \mathbb{I}(r \in a_1) \left[ (1 - p_1) \mathbb{I}(x(v) = r(v)) + p_1 n_1(r, x^*) \mathbb{I}(r(v) = b) \right] \\
\end{align*}

The next step is to only accept this state if \(r \in a(r^*)\), that is,
accept \(r\) as part of \(a(r^*)\) with probability
\(\mathbb{I}(r \in a(r^*)) / \mathbb{I}(r \in a_1)\). If acceptance
occurs, the new state has weight \[
w_2(r) = \mathbb{I}(r \in a(r^*)) \left[ (1 - p_1) \mathbb{I}(x(v) = r(v)) + p_1 n_1(r, x^*) \mathbb{I}(r(v) = b) \right].
\]

If rejection occurs, then it must be that the color of \(r(v) = b\) and
there was already a neighbor of \(v\) with that color. In this case, do
the same thing as in the ignored removal and uniformly at random without
replacement search the neighbors of \(v\) until the neighbor \(w\) with
color \(b\) is found. Forbid the searched neighbors to have color \(b\)
and freeze \(w\) at color \(b\).

Back to the accepted case. Setting \[
p_1 = 1 / d(x^*),
\] then \((1 - p_1) \leq p_1 n_1(r, x^*)\).

Let \[
p_2(r, x^*) = \mathbb{I}(r(v) \neq b) + \frac{d(x^*) - 1}{n_1(r, x^*)} \mathbb{I}(r(v) = b).
\] Then \begin{align*}
w_2(r) p_2(r, x^*) &= 
 \mathbb{I}(r \in a(r^*)) 
 \left[ \left(1 - \frac{1}{d(x^*)} \right)
    \mathbb{I}(r(v) \neq b) + \frac{d(x^*) - 1}{d(x^*)} \mathbb{I}(r(v) = b) \right] \\
    &= \mathbb{I}(r \in a(r^*)) \frac{d(x^*) - 1}{d(x^*)}.
\end{align*}

Therefore \(w_2(r) p_2(r, x^*) \propto w_{r^*}(r)\). So the weights are
great if acceptance occurs.

Under rejection, the weight becomes \begin{align*}
w_2(r) (1 - p_2(r, x^*)) &= 
 \mathbb{I}(r \in a(r^*)) 
 \left[ 0 \cdot \mathbb{I}(r(v) \neq b) + \frac{n_1(r, x^*) - (d(x^*) - 1)}{d(x^*)} \mathbb{I}(r(v) = b) \right] \\
 &= \mathbb{I}(r \in a(r^*)) 
 \frac{n_1(r, x^*) - (d(x^*) - 1)}{d(x^*)} \mathbb{I}(r(v) = b). \\
\end{align*}

In this rejection case the state must have \(r(v) = b\), but it gets
weight proportional to \(n_1(r, x^*) - (d(x^*) - 1)\). The
\(\mathbb{I}(r(v) = b)\) part is easy to take care of by freezing node
\(v\) at color \(b\).

To understand the remaining expression \(n_1(r, x^*) - (d(x^*) - 1)\),
consider going through the nodes in \(U(x^*)\) in a specified order. As
each neighbor is encountered, it either is a color not seen by \(v\)
before that is blocked, or it does not block a new color. It might not
block a new color because it matches a frozen color, the forbidden
color, or a previous neighbor in \(U(x^*)\) that was already examined.

Altogether, the expression \(n_1(r, x^*) - (d(x^*) - 1)\) counts the
number of neighbors in \(U(x^*)\) that do \emph{not} block a new color
as they are considered.

The reasoning is as follows. The constant \(d(x^*) - 1\) represents the
number of colors that are possibly available once the frozen neighbor
colors are taken into account as well as the forbidden color that in the
case under consideration cannot be used by a frozen neighbor minus the
actual number of neighbors in \(U(x^*)\).

Each of these neighbors \(U(x^*)\) could have blocked a different color,
leaving exactly \(d(x^*) - 1\) colors for node \(v\) under \(x^*\).

But in fact, there were \(n_1(x, x^*)\) different colors for \(x^*\),
where \(n_1(x, x^*) \geq d(x^*) - 1\). So each neighbor that contributes
to \(n_1(x, x^*) - (d(x^*) - 1)\) is a neighbor that did \emph{not}
block a new color, but instead repeated a color already blocked.

For instance, suppose \(k = 10\), \(b = 10\), there is one neighbor
frozen at color \(3\), and the four unfrozen and non-ignored neighbors
are colors \(3, 2, 2, 1\) respectively.

Then the neighbor colored \(3\) does not block a new color, since that
was already blocked by the frozen neighbor. The first neighbor colored 2
blocks a new color, but the second neighbor colored 2 does not. Finally,
the neighbor colored 1 blocks a new color. So altogether there are two
neighbors that block new colors.

Now examine the expression for this example. Here
\(n_1(x, x^*) = 10 - 3 - 1 = 6\) as colors 4 through 9 are available to
be used for \(x(v)\). Next, there are 9 colors not blocked by frozen
neighbors and there are \(4\) neighbors that are neither frozen nor
ignored, making \(d(x^*) = 9 - 4 = 5\). Hence
\(n_1(r, x^*) - (d(x^*) - 1) = 6 - (5 - 1) = 6 - 4 = 2\).

The next step is to assign to each neighbor in \(U(x^*)\) a share of the
responsibility for this formula. A neighbor colored the forbidden color
receives share \(1\), as does a neighbor colored the same as a frozen
neighbor.

Suppose three neighbors have color \(4\). Then their contribution to
this formula is \(3 - 1 = 2\). If each of the three nodes is assigned a
share of \(2 / 3\), then their total contribution is
\((2/3) + (2 / 3) + (2 / 3) = 2\).

In general neighbors that match other neighbors in \(U(x^*)\) but not
frozen or forbidden colors get a share of \((\ell - 1) / \ell\), where
\(\ell\) is the number of neighbors in \(U(x^*)\) with the same color.
Let \(h(w)\) be the share for each node.

Choose from among the neighbors \(w\) of \(v\) with probability
proportional to \(h(w)\). Since \[
\sum_{w \in U(x^*)} h(w) = n_1(r, x^*) - (d(x^*) - 1),
\] the weight of the state when node \(w\) is frozen at its color is \[
w_3(r) = \mathbb{I}(r \in a(r^*)) 
 \frac{h(w)}{d(x^*)} \mathbb{I}(r(v) = b) \mathbb{I}(\text{neighbor } w \text{ frozen at } r(w)).
\]

Freezing this node reveals its color. If it is the forbidden color or
that of another frozen node, then \(h(w) = 1\) and the weights are
uniform.

If, however, this node is one of these colors, there must be at least 1
match and perhaps more among the rest of the neighbors. The best that
can be done is to freeze all members of \(U(x^*)\) that share this
color, then refreeze \(v\) to this color to indicate that no other
members of \(U(x^*)\) have this color, and finally forbid the neighbors
of \(v\) to the original color \(b\) because this the subcase that the
state is in.

Putting all this together, the procedure for removing a forbidden color
is as follows.

\vspace*{10pt}

\textbf{Remove forbidden}\((x, x^*, v)\)

\emph{Input} state \(x\) with index \(x^*\) where \(x^*(v) = -b\)

\begin{enumerate}
\def\labelenumi{\arabic{enumi}.}
\item
  If \((\exists t \in N(v))(x^*(t) = b)\), then let
  \(y(v) \leftarrow x(v)\),
  \(y^*(V \setminus \{ v \}) \leftarrow x^*(V \setminus \{ v \})\),
  \(y^*(v) \leftarrow \bot\), output \((y, y^*)\) and exit.
\item
  Let \(r(V \setminus \{v\}) \leftarrow x(V \setminus \{v\})\),
  \(r^*(V \setminus \{ v \}) \leftarrow x^*(V \setminus \{ v \})\),
  \(r^*(v) \leftarrow \bot\).
\item
  Let \(d\) be the number of colors not used by frozen neighbors of
  \(v\) in \(x^*\) minus the number of neighbors that are not frozen or
  ignored. With probability \(1 / d\) let \(r(v) \leftarrow b\),
  otherwise let \(r(v) \leftarrow x(v)\).
\item
  If \(r(v) \neq b\) then output \((r, r^*)\) and exit.
\item
  If there is a neighbor of \(v\) with color \(b\), search uniformly
  without replacement through the neighbors of \(v\) until such a
  neighbor \(w\) is found. Set \(s^*\) to be the index that is the same
  as \(r^*\) except the neighbors searched before \(w\) are forbidden to
  have color \(b\) and \(w\) is frozen at \(b\). Output \((r, s^*)\) and
  exit.
\item
  Let \(n_1\) be the number of colors valid for node \(v\) using
  \(x(V \setminus \{v\})\) under \(x^*\). With probability
  \((d - 1) / n_1\), output \((r, r^*)\) and exit.
\item
  For every unfrozen or non-ignored neighbor \(w\) of \(v\), set
  \(h(w)\) to be 1 if it shares a color with a frozen neighbor of \(v\)
  or is the forbidden color, or \((g - 1) / g\) if \(w\) is part of a
  group of \(g\) unfrozen or non-ignored neighbors of \(v\) that all
  have the same color. Choose a node \(w\) with probability proportional
  to \(h(w)\).
\item
  If \(w\) has the same color as a frozen node or is the forbidden
  color, let \(q^*\) be the same index as \(r^*\) but with \(v\) and
  \(w\) frozen at their colors, output \((r, q^*)\), and exit.
\item
  Let \((t, t^*)\) be the same state-index pair as \((r, r^*)\) except
  all the neighbors of \(v\) with the same color as \(w\) are frozen at
  their color, any remaining unfrozen neighbors of \(v\) are forbidden
  to have color \(b\), and \(v\) is frozen at the same color as \(w\).
  Output \((t, t^*)\) and exit.
\end{enumerate}

\begin{lemma}
\textbf{Remove forbidden} has the weight function given by the output
index for the output state.

\end{lemma}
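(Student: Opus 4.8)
The plan is to mirror the proof of Lemma~1. Run \textbf{Remove forbidden}$(x,x^*,v)$ on a state $x$ drawn uniformly from $a(x^*)$, and treat separately each of the six places where the procedure can exit (steps 1, 4, 5, 6, 8, and 9). For each exit I would check two things. First, whenever the procedure outputs $(y,y^*)$ the state $y$ actually lies in $a(y^*)$, so that it is at least consistent to claim $w_{y^*}$ for the output; this is routine from the way each index is built (the new frozen and forbidden markers agree with the colors that $y$ already carries, and, since at most one of the endpoints of any edge touching $v$ is ever frozen while neither is ignored, every factor $f_e$ stays $1$). Second, for a fixed output index $y^*$ the probability of producing a given $y\in a(y^*)$, averaged over the uniform input, is the same for all such $y$; since every input carries weight $1$, this reduces to counting the preimages $x\in a(x^*)$ that can reach $(y,y^*)$ and multiplying by the probabilities of the intervening recolor, split, search, and redraw steps, then checking the product is independent of $y$. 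All of the per-step weight identities needed are already displayed in the text preceding the algorithm; the proof just assembles them.

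The three easy exits go first. Step 1 (a neighbor frozen at $b$): no state of $a(x^*)$ assigns $v$ the color $b$, so $a(y^*)=a(x^*)$ and the map is the identity, leaving the weights untouched. Step 4 ($r(v)=x(v)\neq b$): the only preimage of $r$ is $r$ itself, reached with probability $1-1/d$, a constant. Step 6 ($r(v)=b$, accepted into $a(r^*)$): here $r$ has exactly $n_1=n_1(x,x^*)$ preimages, namely the $n_1$ valid recolorings of $v$, which collapse to the same $r$ once $v$ is set to $b$, and each reaches $r$ with probability $\tfrac1d\cdot\tfrac{d-1}{n_1}$, for a total of $\tfrac{d-1}{d}$; combining steps 4 and 6 shows $w_{r^*}$ is uniform over all of $a(r^*)$, the states with $r(v)\neq b$ arising from step 4 and the rest from step 6 (step 5 has already removed exactly the $r$ with a $b$-colored non-ignored neighbor, which lie outside $a(r^*)$). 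These are immediate from the displayed computations with $p_1=1/d$ and the formula for $p_2$.

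The search/rejection exits — steps 5, 8, and 9 — are the substance. Step 5 treats $r\in a_1\setminus a(r^*)$ exactly as in \textbf{Remove ignored}: search $v$'s (non-ignored) neighbors uniformly without replacement for the one $w$ with color $b$, forbid those searched earlier the color $b$, and freeze $w$ at $b$; the invariant that a single color is forbidden is preserved because $b$ is already that color, and, as in Case~2 of Lemma~1, the probability of a given search result depends only on $|W|$ and $\deg(v)$, so that factor times the preimage count is constant in $y$. Steps 8 and 9 treat the second rejection (rejected in step 6, so $r(v)=b$ and $r\in a(r^*)$), where after choosing a neighbor $w$ with probability proportional to $h(w)$ the weight of $r$ is proportional to $h(w)/d$; the normalizer is the identity $\sum_{w\in U(x^*)}h(w)=n_1(r,x^*)-(d(x^*)-1)$, which I would justify exactly as sketched in the text — walk through $U(x^*)$ in a fixed order and observe that each neighbor either blocks a genuinely new color or repeats a color already blocked by a frozen neighbor, by $b$, or by an earlier member of $U(x^*)$. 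In step 8 ($h(w)=1$) freezing $v$ and $w$ at their colors already makes the weight uniform on $a(q^*)$; in step 9 ($h(w)=(g-1)/g$ with $w$ in a monochromatic class of size $g$ inside $U(x^*)$) one must additionally freeze that whole class and re-freeze $v$, so that membership in $a(t^*)$ forces precisely $g$ of $v$'s non-frozen neighbors to carry $w$'s color — this pins $g$, hence pins $h(w)$, making the weight uniform.

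The main obstacle is step 9. One has to verify that the produced index $t^*$ captures \emph{exactly} the inputs that can yield $(t,t^*)$, and that three cancellations line up: the $n_1(t,x^*)$ preimages cancel the $1/n_1$ in the step-6 rejection probability, the $g$-fold multiplicity of ways to select a member of $w$'s color class in step 7 cancels the $(g-1)/g$ in $h(w)$, and the normalizer $\sum h=n_1-(d-1)$ cancels its matching factor, leaving a quantity depending on $t^*$ alone. Keeping these factors straight — in particular confirming the $\sum h$ identity is exactly, not approximately, the factor needed, that $d(x^*)\geq 1$ and $n_1(x,x^*)\geq d(x^*)-1$ so every displayed ratio is a genuine probability, and that re-freezing $v$ at $w$'s color indeed forces the remaining non-frozen neighbors off that color — is the part that requires care; steps 5 and 8 are then strictly simpler instances of the same accounting, and step~1 together with the acceptance branches has already been disposed of.
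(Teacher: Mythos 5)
Your proposal is correct and takes essentially the same route as the paper's proof: a case analysis over the exit points (line 1; lines 4 and 6 via the displayed choices of \(p_1=1/d\) and \(p_2\); line 5 handled verbatim as the search in Lemma 1; lines 8 and 9 via the identity \(\sum_{w\in U(x^*)}h(w)=n_1-(d-1)\) and the observation that freezing \(w\)'s color class and \(v\) turns \(h(w)\) and \(\mathbb{I}(r(v)=b)\) into functions of the output index alone). Your preimage-counting phrasing of the accepted branches is just the paper's weight-propagation identity \(w_1(r)=\mathbb{I}(r\in a_1)\left[(1-p_1)\mathbb{I}(r(v)=x(v))+p_1 n_1(r,x^*)\mathbb{I}(r(v)=b)\right]\) made explicit, so the two arguments coincide in substance.
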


\begin{proof}
The proof (while more complicated) follows along the similar lines as
the previous ones. For each possible outcome index, it is necessary to
check that the weight ends up being the proper one.

When the output occurs in line 1 of the step, then \(x = y\) and
\(a(x^*) = a(y^*)\) so \(w_{x^*}(x) = w_{y^*}(y)\) and there is no need
to consider further.

When the output has \(y^* = r^*\) in line 4 or line 6, recall that
\(p_1\) and \(p_2\) were both chosen to ensure that \(w_{y^*}(y)\) was
correct.

When the output has \(y^* = s^*\) at line 5, the search for the blocking
neighbor has the same probability as in the proof of the previous fact
and the calculation is identical here.

When the output is \((r, q^*)\) in line 8, then \(h(w) = 1\) since from
\(q^*\) the newly frozen neighbor \(w\) of \(v\) either matches the
color of another frozen neighbor of \(v\) or the forbidden color.

The index \(q^*\) freezes \(v\) at color \(b\). Hence the weight factor
\(\mathbb{I}(r(v) = b)\) in \(w_3\) becomes a function \(f_1\) of
\(q^*\) and so can be written as \[
w_{q^*}(r) = \mathbb{I}(r \in a(q^*)) 
 \frac{1}{d(x^*)} f(q^*).
\] This makes the distribution uniform over \(a(q^*)\).

In the last output type \((t, t^*)\) at line 9, since every node with
the same color as \(w\) is frozen in \(t^*\), it is easy to count the
number of neighbors of \(v\) with that color. Hence \(h(w) = f_2(t^*)\)
for a function \(f_2\). Moreover, since all the other unfrozen neighbors
of \(v\) are forbidden color \(b\), \(\mathbb{I}(r(v) = b)\) is a
function \(f_3(t^*)\). This makes \(w_3\) equal to \[
w_{t^*}(t) = \mathbb{I}(t \in a(t^*)) 
 \frac{f_2(t^*)}{d(x^*)} f_3(t^*)
\] and the final distribution is uniform over \(a(t^*)\).

\end{proof}

\section{Proving Theorem 1}
\label{proving-theorem-1}

Each step in the main algorithm was written by saying select \(v\) such
that \(x^*(v)\) has some property, but it is straightforward to write
the index as a four component partition of the set of nodes \(V\)
labeled
\((V_{\text{for}}, V_{\text{froz}}, V_{\text{ig}}, V_{\text{unr}})\)
together with a single color \(b\) that all the nodes in
\(V_{\text{for}}\) are forbidden to have. This makes selection of \(v\)
a constant time process.

Each step then makes a fixed number of changes to \(v\), its neighbors
and possibly the neighbors of a single neighbor. For \(\Delta\) the
maximum degree of the graph, this means that every step takes time at
most \(O(\Delta)\) to execute.

Aside from various coin flips undertaken at each step, some steps also
choose a color uniformly from \(C\).   Also, \( O(\Delta \log(\Delta)) \) bits are used to search the non-ignored unfrozen neighbors of \( v \) uniformly at random without replacement.  Therefore \(O(\Delta \log(k))\) uniform
random bits suffices to make the random draws in a single step.

So the remaining question is, starting from the state where all nodes
are ignored, what is the expected number of steps needed to make a node
unrestricted?

If the current index has every node ignored, then the expected number of
steps in \textbf{RR algorithm for proper colorings} needed to reach the
index where every node is unrestricted is at most \(O(\#V)\), where each
step takes \(O(\Delta)\) time to implement when \(k > 3.637 \Delta\).

Let \(T\) denote the number of steps needed for the algorithm to
terminate.

As usual with these types of proofs, a potential function \(\phi\) on
indices will be used that is shown to decrease by \(\epsilon > 0\) at
each step. That is, for \(\mathcal{A}(x, x^*)\) equal to \(y^*\) for any
of \textbf{Remove frozen}, \textbf{Remove forbidden}, or \textbf{Remove
ignored} with \(v\) any appropriate node, it will be shown that \[
\mathbb{E}[\phi(\mathcal{A}(x, x^*)) | \phi(x^*)] \leq \phi(x^*) - \epsilon.
\]

Standard result from martingale theory (together with the fact that
there is positive probability of \(\phi(x^*)\) decreasing by a fixed
amount each step) then give that \[
\mathbb{E}[T] \leq \phi(x_0^*) / \epsilon.
\]

This function assigns each condition (forbidden, frozen, ignored) a
positive weight. Since the weights can be multiplied by any positive
factor and still work, the weight of an ignored node can be scaled to be
1. \begin{align*}
\phi(x^*) &= w_1 \#(\{v:x^*(v) \in C \}) + w_2 \#(\{v:x^*(v) \in - C\}) + \#(\{v:x^*(v) = 0\}) \\
 &= w_1 \#V_{\text{froz}} + w_2 \#V_{\text{for}} + \#V_{\text{ig}}.
\end{align*}

Therefore, for the initial state \(x^*_0\) where every node is ignored,
\(\phi(x^*_0) = \#V\). At termination, \(\phi(x^*) = 0\).

Now set the weights.

\textbf{Remove frozen.} The frozen removal allows us to understand how
to relate \(w_1\) and \(w_2\). Remember that removal of a frozen node
results in one ignored node and the possible addition of up to
\(\Delta\) forbidden nodes. Therefore, by setting \[
w_1 = 1 + \Delta w_2 + \epsilon,
\] the expected change from removing a frozen node will be at most
\(-\epsilon\).

Now consider the ignored node. If \(x(v)\) does not match any of its
neighbors, one ignored node is removed. But if it does, it creates at most on
average \((\Delta - 1) / 2\) forbidden nodes and one frozen node.

\textbf{Remove ignored}. Any ignored node has color uniformly chosen
over \(C\). Therefore the chance of a conflict is at most
\(\Delta / k\), so the expected change in the potential function is at
most \[
(-1)\frac{k - \Delta}{k} + \frac{\Delta}{k}\left[ \frac{\Delta - 1}{2} w_2 + w_1 \right]
\] Let \[
\alpha = \frac{k - 1}{\Delta},
\] then this is upper bounded by \[
-\frac{\alpha - 1}{\alpha} + \frac{1}{\alpha}\left[\frac{3\Delta - 1}{2} w_2 + \epsilon \right]
\] This is less than \(-\epsilon\) when \begin{align}
w_2 = \frac{2}{3\Delta - 1}(\alpha - 1 - \epsilon(\alpha + 1))
\end{align} for \(0 < \epsilon < (\alpha - 1) / (\alpha + 1)\).

\textbf{Remove forbidden}

Now consider what happens when an attempt is made to remove the
forbidden condition at \(b\). In line 1, the condition is simply
removed, so the expected change is \(- w_2\).

In line 4 the condition is also removed. This line activates with
probability \(1 - 1 / d\) and also reduces the number of forbidden
conditions by 1.

With probability \(1 / d\), an attempt is made to recolor \(v\) with the
forbidden color \(b\).

If a neighbor is already colored \(b\) this results in rejection. The
search for this neighbor, adds (on average) \(\Delta - 1\) forbidden
nodes, then \(w\) is frozen and \(v\) changes to ignored. On the bright
side, the forbidden condition on \(v\) is removed, so this average
change in the potential function is \[
\frac{1}{d}\left[-w_2 + \frac{\Delta - 1}{2}w_2 + 1 + (1 + \Delta w_2 + \epsilon)\right]
= \frac{1}{d} \left[ \frac{3(\Delta -1)}{2} w_2 + 2 + \epsilon \right].
\]

If there is no neighbor colored \(b\), then the chance of rejection is
\(1 / n_1(x, x^*) \leq 1 / (k - \Delta - 1) = (1 / (\alpha - 1))(1 / \Delta)\),
in which case \(v\) is frozen (but at least not forbidden any more.)

What is the expected number of unfrozen and non-ignored neighbors that
are frozen? There are at most \(\Delta\) of these neighbors. The chance
that a particular neighbor has the same color as one of the
\(\Delta - 1\) other neighbors is at most
\((\Delta - 1) / (k - \Delta - 1) \leq 1 / (\alpha - 1)\).

Lines 8 and 9 are reached only if the color of \(v\) is switched to
\(b\) which happens with probability \(1 / d\). In this case, when none
of the neighbors of \(v\) have color \(b\), on average how many
neighbors will be frozen?

Consider a particular neighbor \(w\) of \(v\). What is the chance that
it is frozen? Suppose the rest of the sample coloring is chosen except
for \(w\) using \(x^*\). Among the rest of the neighbors, there are
\(a_i\) with color \(i \in k\). Then the only way \(w\) can be frozen to
color \(i\) is if \(a_i > 0\), rejection occurs, \(w\) has color \(i\),
and color \(i\) is chosen. Note \(a_i > 0\) either because of a frozen
neighbor of \(v\) or an unfrozen neighbor.

The chance that \(w\) has color \(i\) is at most \[
\frac{1}{k - \Delta - 1}
\] since \(w\) has at most \(\Delta\) neighbors and there is possibly
one forbidden color for \(w\).

Let \(h\) be the sum over \(h(w')\) over the neighbors \(w'\) not
including \(w\). If \(w\) has color \(i\) where \(a_i > 0\), that
increases the sum of the \(h(w')\) by 1. Hence the probability of
picking color \(i\) to be frozen after choosing color \( i \) for \( w \) is
\[
\frac{h(w)}{h + 1}.
\]

The chance of rejection is \((h + 1) / n_1\), where \(n_1\) is the
number of colors available to node \(v\) under \(x^*\). Therefore, the
probability a neighbor \(w\) of \(v\) is frozen during lines 8 or 9 is
at most \[
\frac{1}{k - \Delta - 1} \cdot \frac{h(w)}{h + 1}\cdot \frac{h + 1}{n_1} = \frac{h(w)}{n_1(k - \Delta - 1)}.
\]

Summing these expected change bounds over all neighbors \(w'\) of \(v\)
gives an expected number of frozen neighbors that is at most \[
\sum_{w'} \frac{h(w')}{n_1(k - \Delta - 1)}  = \frac{h + 1}{n_1(k - \Delta - 1)} \leq 
\frac{\Delta}{(k - \Delta - 1)^2} = \frac{1}{(\alpha - 1)^2 \Delta}.
\]

Finally, if rejection occurs, \(v\) is frozen and might create up to
\(\Delta\) forbidden nodes, and so increases the number of frozen nodes
by 1. This brings the total change (including the \(1 / d\) chance of
trying to recolor \(v\) as \(b\) in the first place) to \[
\frac{1}{d} \left[\frac{h + 1}{n_1}\left[1 + \Delta w_2\right] + \frac{1}{(\alpha - 1)^2 \Delta}\right] \leq \frac{1}{d} \left[\frac{1}{\alpha - 1}\left[1 + \Delta w_2\right] +  \frac{1}{(\alpha -1)^2 \Delta} \right].
\]

This is less than the expected change of potential when there is a
neighbor of \(v\) colored \(b\), so that expression dominates.

Overall, an upper bound \(\delta\) on the expected change during this
step is \begin{align*}
\delta &\leq \left(1 - \frac{1}{d}\right)(-w_2) + 
 \frac{1}{d} \left[ \frac{3(\Delta - 1)}{2} w_2 + 2 + \epsilon \right].
\end{align*} To guarantee that \(\delta \leq - \epsilon\) requires \[
-\epsilon \geq -w_2 + \frac{1}{d}\left[ \frac{3(\Delta + 1)}{2} w_2 + 2 + \epsilon \right].
\] which is true if and only if \[
d \geq \frac{(3/2)(\Delta - 1) w_2 + 2 + \epsilon}{w_2 - \epsilon}.
\]

Now \[
d \geq k - \Delta \geq \Delta(\alpha - 1),
\] so this equation is satisfied if \[
\Delta(\alpha - 1) = \frac{(3/2)(\Delta - 1) w_2 + 2 + \epsilon}{w_2 - \epsilon}.
\]

Solving this equation gives \[
\epsilon
=
\frac{2\Delta \alpha^{2} - 7\Delta \alpha - \Delta + 3 \alpha - 1}
     {3\Delta^{2} \alpha - 3\Delta^2 + 2\Delta \alpha^2 - 4\Delta \alpha - \Delta + 3 \alpha + 2}.
\]

As long as \(\epsilon > 0\), the expected number of steps will be
bounded above by \(n / \epsilon\).

In the expression, to be greater than 0 for rising \(\Delta\) requires
that \(2\alpha^2 - 7\alpha - 1 > 0\), or
\(\alpha > (1 / 4)(7 + \sqrt{57}) = 3.637\ldots\).

This completes the proof of Theorem 1.

\bibliographystyle{plain}

\end{document}